\renewcommand{\@seccntformat}[1]{\bf\csname the#1\endcsname.}
\renewcommand{\section}{\@startsection{section}{1}
	\z@{.7\linespacing\@plus\linespacing}{.5\linespacing}
	{\normalfont\upshape\bfseries\centering}}
\renewcommand{\@biblabel}[1]{\@ifnotempty{#1}{#1.}}
\theoremstyle{plain}
\newtheorem{thm}{Theorem}[section]
\theoremstyle{definition}
\def \>{\succ}
\def \<{\prec}
\begin{document}	
\title[Imed Basdouri\textsuperscript{1}, Bouzid Mosbahi\textsuperscript{2}, Ahmed Zahari\textsuperscript{3} ]{Rota-Type Operators on 2-Dimensional Pre-Lie Algebras}
	\author{Imed Basdouri\textsuperscript{1}, Bouzid Mosbahi\textsuperscript{2}, Ahmed Zahari\textsuperscript{3}}
\address{\textsuperscript{1}Department of Mathematics, Faculty of Sciences, University of Gafsa, Gafsa, Tunisia}
\address{\textsuperscript{2}Department of Mathematics, Faculty of Sciences, University of Sfax, Sfax, Tunisia}
 \address{\textsuperscript{3}
IRIMAS-Department of Mathematics, Faculty of Sciences, University of Haute Alsace, Mulhouse, France}
\email{\textsuperscript{1}basdourimed@yahoo.fr}
\email{\textsuperscript{2}mosbahi.bouzid.etud@fss.usf.tn}
\email{\textsuperscript{3}abdou-damdji.ahmed-zahari@uha.fr}
	
	
	\keywords{Rota-Baxter operator, Reynolds operator, Nijenhuis operator, average operator,
 dendriform algebra}
	\subjclass[2020]{17A30, 17B38, 16W20, 16S50}
	
	\date{\today}

\begin{abstract}
This paper studies Rota-Baxter, Reynolds, Nijenhuis, and Averaging operators on 2-dimensional pre-Lie algebras over $\mathbb{C}$. Using the classification of 2-dimensional pre-Lie algebras and computational tools like Mathematica or Maple, we describe these Rota-type operators in detail. Our results provide a deeper understanding of these operators and their roles in algebraic structures.
\end{abstract}

\maketitle

\section{ Introduction}\label{introduction}

Pre-Lie algebras, also known as right-symmetric algebras, appeared in works by Gerstenhaber, Koszul, and Vinberg in the 1960s. They are a generalization of associative algebras and represent the most prominent non-associative subvariety of Lie-admissible algebras. Pre-Lie algebras have various applications in geometry and physics~\cite{2,4}.
A left (respectively right) pre-Lie algebra is a linear space $A$ endowed with a linear map $\cdot : A \otimes A \to A$ satisfying, for all $x, y, z \in A$,
\[
x \cdot (y \cdot z) - (x \cdot y) \cdot z = y \cdot (x \cdot z) - (y \cdot x) \cdot z,
\]
respectively
\[
x \cdot (y \cdot z) - (x \cdot y) \cdot z = x \cdot (z \cdot y) - (x \cdot z) \cdot y.
\]

Any associative algebra is both a left and a right pre-Lie algebra. Moreover, if $(A, \cdot)$ is a left or a right pre-Lie algebra, then $(A, [x, y] = x \cdot y - y \cdot x)$ defines a Lie algebra~\cite{1,3}.

Another important concept in this study is the notion of Rota-type operators. These operators have found applications in various fields~\cite{5,8,10}.
A \textit{Rota--Baxter operator} on a pre-Lie algebra $A$ over a field $\mathbb{C}$ is a linear map $P: A \to A$ satisfying
\begin{align}
    P(x) \cdot P(y) = P\left( P(x) \cdot y + x \cdot P(y) + \lambda \, x \cdot y \right), \label{eq4}
\end{align}
for all $x, y \in A$ and some $\lambda \in \mathbb{C}$. If $P$ is a Rota--Baxter operator of weight $0$, it is also a Rota--Baxter operator of weight $1$, so it suffices to study these two cases~\cite{5,8}.

This paper focuses on specific types of Rota-type operators, such as Reynolds, Nijenhuis, and averaging operators, which are defined as follows~\cite{6,7}.

\textit{Reynolds operator:}
\begin{align}
    P(x) \cdot P(y) = P\left( x \cdot P(y) + P(x) \cdot y - P(x) \cdot P(y) \right), \label{eq6}
\end{align}

\textit{Nijenhuis operator:}
\begin{align}
    P(x) \cdot P(y) = P\left( P(x) \cdot y + x \cdot P(y) - P(x \cdot y) \right), \label{eq8}
\end{align}

\textit{Averaging operator:}
\begin{align}
    P(x) \cdot P(y) = P\left( x \cdot P(y) \right) = P\left( P(x) \cdot y \right). \label{eq10}
\end{align}

The classification of 2-dimensional complex pre-Lie algebras is well known; see, for example,~\cite{1,12}.

In this paper, we study Rota-type operators on 2-dimensional pre-Lie algebras. The focus is on identifying, constructing, and analyzing these operators on low-dimensional structures, providing insights into the broader theory~\cite{9,11}.

\begin{thm}

Let $A$ be a nonzero 2-dimensional pre-Lie algebra. Then $A$ is isomorphic to one and only one of the following algebras:

\begin{align*}
A_1 &: \quad e_1 \cdot e_1 = e_1 + e_2, \quad e_2 \cdot e_1 = e_2, \\
A_2 &: \quad e_1 \cdot e_1 = e_1 + e_2, \quad e_1 \cdot e_2 = e_2, \\
A_3 &: \quad e_1 \cdot e_1 = e_2, \\
A_4 &: \quad e_2 \cdot e_1 = e_1, \\
A_5^\alpha &: \quad e_1 \cdot e_1 = e_1, \quad e_1 \cdot e_2 = \alpha e_2, \\
A_6^\alpha &: \quad e_1 \cdot e_1 = e_1, \quad e_1 \cdot e_2 = \alpha e_2, \quad e_2 \cdot e_1 = e_2, \\
A_7 &: \quad e_1 \cdot e_1 = e_1, \quad e_2 \cdot e_2 = e_2, \\
A_8 &: \quad e_1 \cdot e_1 = e_1, \quad e_1 \cdot e_2 = 2e_2, \quad e_2 \cdot e_1 = \frac{1}{2}e_1 + e_2, \quad e_2 \cdot e_2 = e_2.
\end{align*}
\end{thm}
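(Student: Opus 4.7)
The plan is to fix a basis $\{e_1, e_2\}$ of $A$ and write the multiplication in terms of eight complex structure constants $c_{ij}^k$ defined by $e_i \cdot e_j = c_{ij}^1 e_1 + c_{ij}^2 e_2$. Imposing the pre-Lie identity on the eight triples $(e_i, e_j, e_k)$ reduces to a finite system of polynomial equations in the $c_{ij}^k$, and the classification amounts to describing the orbits of the natural $\mathrm{GL}_2(\mathbb{C})$-action on the solution variety.

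The first organizing observation is that the commutator $[x,y] = x \cdot y - y \cdot x$ always defines a Lie algebra structure on $A$, as noted in the excerpt. In dimension two the only Lie algebras up to isomorphism are the abelian one and the unique non-abelian algebra with bracket $[e_1, e_2] = e_2$. This dichotomy gives a clean initial split of the classification.

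In the commutative branch (the associated Lie algebra is abelian, and hence the product itself is commutative), a direct expansion shows that the pre-Lie identity forces associativity, so one is reduced to classifying two-dimensional commutative associative complex algebras. A Jordan-type analysis of the left-multiplication operator $L_{e_1}$, split by whether $A$ contains an idempotent and by whether it has a nonzero nilpotent ideal, recovers $A_3$, $A_7$, and the commutative specializations $A_5^{0}$ and $A_6^{1}$ of the parametric families. In the non-commutative branch I would normalize the basis so that $[e_1, e_2] = e_2$, thereby restricting the change-of-basis group to the Borel subgroup of upper-triangular matrices in $\mathrm{GL}_2(\mathbb{C})$. The remaining data consist of the four symmetric combinations $e_i \cdot e_j + e_j \cdot e_i$; substituting into the pre-Lie identity gives a tractable polynomial system, and sequential use of the remaining Borel freedom (scaling $e_1$, scaling $e_2$, then adding a multiple of $e_2$ to $e_1$) reduces the structure constants to the canonical values appearing in $A_1$, $A_2$, $A_4$, the non-commutative members of $A_5^\alpha$ and $A_6^\alpha$, and the exceptional algebra $A_8$.

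The main obstacle I anticipate is organizing the Borel normalization so that the case analysis is exhaustive without double-counting canonical forms; at each step one must carefully record which transformations remain available and which branches of the polynomial system are still open. A secondary difficulty is proving that the parameter $\alpha$ labelling $A_5^\alpha$ and $A_6^\alpha$ is a genuine isomorphism invariant. For this I would identify an invariant such as the spectrum of $L_{e_1}$ restricted to the line $\mathbb{C} e_2$, which returns exactly $\alpha$. A final check that the eight families are pairwise non-isomorphic -- in each case distinguished by an easily computed invariant such as the isomorphism type of the associated Lie algebra, the existence of idempotents, or the dimension of the image of the multiplication map -- completes the proof.
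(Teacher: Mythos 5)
The paper itself offers no proof of this theorem --- it is imported from the cited classification literature (Bai--Meng, and Bai's survey), and those sources proceed essentially as you propose: split according to the sub-adjacent Lie algebra, then classify the compatible products modulo the automorphisms of that Lie algebra. Your commutative branch is correct and can be made complete in two lines: commutativity makes the associator antisymmetric under swapping its outer arguments, the pre-Lie identity makes it symmetric under swapping the first two, and since these transpositions are conjugate in $S_3$ the associator must vanish in characteristic $0$; one is then reduced to the four nonzero two-dimensional commutative associative algebras, which are exactly $A_3$, $A_7$, $A_5^0$, $A_6^1$.

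The non-commutative branch, however, has a concrete error and a large deferred gap. The error: after normalizing $[e_1,e_2]=e_2$, the residual change-of-basis group is \emph{not} the full Borel subgroup. A map $e_1\mapsto a e_1+b e_2$, $e_2\mapsto d e_2$ preserves this bracket only if $a=1$, so ``scaling $e_1$'' is not available; you have only the two-parameter group $e_1\mapsto e_1+b e_2$, $e_2\mapsto d e_2$ (to recover the scaling you must work with the weaker normalization $[e_1,e_2]\in\mathbb{C}^{*}e_2$). Since you yourself identify the bookkeeping of remaining transformations as the main danger, this miscount would corrupt the case analysis. The gap: this branch contains six of the eight families, including the delicate separation of $A_1$ from $A_2$, both one-parameter families, and the exceptional $A_8$, and your plan for it is only ``a tractable polynomial system'' plus ``sequential normalization'' --- none of which is carried out, so the existence and exhaustiveness of exactly these normal forms remains unproved. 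Finally, ``the spectrum of $L_{e_1}$ on $\mathbb{C}e_2$'' is not an invariant until $e_1$ and $e_2$ are characterized intrinsically (for $A_5^{\alpha}$ with $\alpha\neq 1$ one can take $e_1$ to be the unique nonzero idempotent and $\mathbb{C}e_2$ the kernel of right multiplication); you also still need to exclude cross-family coincidences such as $A_5^{\alpha}\cong A_6^{\beta}$, for which the pair $(\mathrm{tr}\,L_{e},\mathrm{tr}\,R_{e})$ at the canonical idempotent $e$ suffices but must actually be computed.
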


Now let $P$ be a linear operator on $E$ such that
\begin{align*}
\left(\begin{array}{cc}
P(e_1) \\
P(e_2)
\end{array}\right)
&=\left(\begin{array}{cc}
r_{11} & r_{12} \\
r_{21} & r_{22}
\end{array}\right)
\left(\begin{array}{cc}
e_1 \\
e_2
\end{array}\right)
\end{align*}

\section{ Main result}
\subsection { Rota-Baxter operator}
\begin{thm}
There is one type of Rota-Baxter operator of weight $0$ for the $2$-dimensional pre-Lie algebra $A_1$, which is as follows:
$P = \begin{pmatrix}
0 & 0 \\
r_{21} & 0
\end{pmatrix}$.
\end{thm}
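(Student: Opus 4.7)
The plan is to parametrize $P$ by its matrix entries, so that $P(e_1) = r_{11} e_1 + r_{12} e_2$ and $P(e_2) = r_{21} e_1 + r_{22} e_2$, and then impose the defining identity \eqref{eq4} with $\lambda = 0$ on every pair of basis vectors $(e_i, e_j)$ with $i,j \in \{1,2\}$. Since $P$ is linear and the product is bilinear, these four conditions are equivalent to the full Rota--Baxter identity. For $A_1$ the multiplication table reduces to $e_1 \cdot e_1 = e_1 + e_2$ and $e_2 \cdot e_1 = e_2$, with $e_1 \cdot e_2 = e_2 \cdot e_2 = 0$, which will keep the symbolic expansions manageable.

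Next, for each pair $(e_i,e_j)$ I would expand $P(e_i)\cdot P(e_j)$ and $P\bigl(P(e_i)\cdot e_j + e_i \cdot P(e_j)\bigr)$ in the basis $\{e_1,e_2\}$ and compare coefficients. This yields eight polynomial equations in the four unknowns $r_{11}, r_{12}, r_{21}, r_{22}$. The pair $(e_2,e_2)$ is the cleanest starting point, because $e_2 \cdot e_2 = 0$ and the only surviving contributions to $P(e_2)\cdot P(e_2)$ come from $r_{21}^2(e_1 \cdot e_1)$ and $r_{21}r_{22}(e_2 \cdot e_1)$; the resulting constraints should already force $r_{22}$ and $r_{21}^2$ into a tight relation. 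The pair $(e_1,e_1)$ should then force $r_{11}=0$, after which the mixed pairs $(e_1,e_2)$ and $(e_2,e_1)$ should pin down $r_{12}=0$ and $r_{22}=0$, while leaving $r_{21}$ entirely free.

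The main obstacle I anticipate is purely bookkeeping: because $A_1$ has two nonzero products and neither the left nor the right action is especially sparse, several cross terms of the form $r_{ij}r_{kl}$ will appear and must be collected carefully. A secondary subtlety is checking the order in which to eliminate variables, since prematurely fixing $r_{22}$ before $r_{11}$ can create spurious branches. I would therefore work through the four equations in the order $(e_2,e_2)$, $(e_1,e_1)$, $(e_2,e_1)$, $(e_1,e_2)$, using a symbolic computation in Mathematica or Maple as a sanity check on the coefficient extraction.

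Finally, I would close the argument by verifying that every operator in the candidate family $P = \bigl(\begin{smallmatrix} 0 & 0 \\ r_{21} & 0 \end{smallmatrix}\bigr)$ indeed satisfies \eqref{eq4}. In this case $P(e_1) = r_{21} e_2$ and $P(e_2)=0$, so $P(e_i)\cdot P(e_j) = 0$ for all $i,j$ (the only nonzero candidate $P(e_1)\cdot P(e_1) = r_{21}^2 (e_2 \cdot e_2)$ vanishes), and $P(e_i)\cdot e_j + e_i \cdot P(e_j)$ always lies in $\operatorname{span}(e_2) = \ker P$. Both sides of the Rota--Baxter identity thus vanish identically, confirming that this one-parameter family exhausts all weight-zero Rota--Baxter operators on $A_1$.
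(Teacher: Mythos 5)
Your plan is sound and, in fact, does more than the paper's own proof: the paper only takes the candidate $P(e_1)=r_{21}e_2$, $P(e_2)=0$ and verifies that both sides of \eqref{eq4} vanish (essentially your last paragraph), whereas the exhaustiveness claim implicit in ``there is one type'' is never argued there. Your strategy of imposing the identity on all four basis pairs and eliminating the entries is exactly what is needed to close that gap, and your closing verification (both sides vanish because $P(e_i)\cdot e_j+e_i\cdot P(e_j)\in\operatorname{span}(e_2)\subseteq\ker P$) is correct and cleaner than the paper's.

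There is, however, a concrete indexing inconsistency you must resolve before executing the plan, and it is inherited from the paper itself: the displayed convention before Section~2 gives $P(e_2)=r_{21}e_1+r_{22}e_2$ (row convention), which is what you declare, yet your predicted outcome (``$r_{21}$ entirely free'') and your final verification (``$P(e_1)=r_{21}e_2$, $P(e_2)=0$'') use the transposed convention. Under your declared parametrization the very first pair you propose to test already kills $r_{21}$: since $P(e_2)\cdot e_2=0$ and $e_2\cdot P(e_2)=r_{21}e_2$, one gets
\[
P(e_2)\cdot P(e_2)=r_{21}^2e_1+(r_{21}^2+r_{21}r_{22})e_2,
\qquad
P\bigl(P(e_2)\cdot e_2+e_2\cdot P(e_2)\bigr)=r_{21}^2e_1+r_{21}r_{22}e_2,
\]
so the $e_2$-coefficients force $r_{21}^2=0$, i.e.\ $r_{21}=0$ --- not a free parameter and not merely ``a tight relation with $r_{22}$.'' Continuing, $(e_2,e_1)$ gives $r_{22}=0$, $(e_1,e_1)$ gives $r_{11}=0$, and the surviving free entry is $r_{12}$, i.e.\ $P(e_1)=r_{12}e_2$, $P(e_2)=0$. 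This is the same one-parameter family of linear maps as in the theorem, but written as the transpose of the stated matrix; so either adopt the column convention (the $(i,j)$ entry is the coefficient of $e_i$ in $P(e_j)$) from the outset, or state the answer as $\bigl(\begin{smallmatrix}0&r_{12}\\0&0\end{smallmatrix}\bigr)$ in your convention. With that fixed, the argument goes through and yields both necessity and sufficiency.
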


\begin{proof}
We compute
\[
P(e_1) \cdot P(e_1) = (r_{21}e_2) \cdot (r_{21}e_2) = r_{21}^2 (e_2 \cdot e_2).
\]

From the pre-Lie algebra product, we have:
\[
e_2 \cdot e_2 = 0 \quad \implies \quad P(e_1) \cdot P(e_1) = 0.
\]

Next, we compute:
\[
P(P(e_1) \cdot e_1 + e_1 \cdot P(e_1)) = P(r_{21}e_2 \cdot e_1 + e_1 \cdot r_{21}e_2).
\]
Expanding further:
\[
P(r_{21}(e_2 \cdot e_1) + r_{21}(e_1 \cdot e_2)).
\]

From the pre-Lie algebra product, we know:
\[
e_2 \cdot e_1 = e_2, \quad e_1 \cdot e_2 = 0, \quad \text{and} \quad P(e_2) = 0.
\]
Thus:
\[
P(P(e_1) \cdot e_1 + e_1 \cdot P(e_1)) = P(r_{21}e_2 + 0) = P(e_2) = 0.
\]

Therefore, it follows that:
\[
P(x) \cdot P(y) = P(P(x) \cdot y + x \cdot P(y)) \quad \text{for all } x, y \in \{e_1, e_2\}.
\]
This completes the proof.
\end{proof}

\begin{thm}
The Rota-Baxter operators of weight $1$ on the $2$-dimensional pre-Lie algebra $A_1$ are the following:\\
$P_1=\left(\begin{array}{cc}
 0 & r_{12} \\
0 & 0
\end{array}\right) ,
\quad
P_2 = \left(\begin{array}{cc}
0 & 0 \\
0 & 0
\end{array}\right)$
\end{thm}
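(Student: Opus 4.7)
The plan is to parametrize an arbitrary linear operator by
\[
P(e_1)=r_{11}e_1+r_{12}e_2,\qquad P(e_2)=r_{21}e_1+r_{22}e_2,
\]
in line with the matrix parametrization fixed before Section~2, substitute into the weight-$1$ instance of~\eqref{eq4}, and extract a polynomial system in $(r_{11},r_{12},r_{21},r_{22})$ whose complete solution set matches the list $P_1,P_2$.

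The first step is to expand, for each ordered pair $(e_i,e_j)$ with $i,j\in\{1,2\}$, both sides of
\[
P(e_i)\cdot P(e_j)=P\bigl(P(e_i)\cdot e_j+e_i\cdot P(e_j)+e_i\cdot e_j\bigr),
\]
using the multiplication table of $A_1$, namely $e_1\cdot e_1=e_1+e_2$, $e_2\cdot e_1=e_2$, and $e_1\cdot e_2=e_2\cdot e_2=0$. Equating the coefficients of $e_1$ and $e_2$ on each side of the four identities then produces eight scalar polynomial relations in the $r_{ij}$. The pair $(e_1,e_2)$ is the sparsest, since $P(e_1)\cdot e_2=0=e_1\cdot e_2$, so it reduces to $P(e_1)\cdot P(e_2)=P(e_1\cdot P(e_2))$; its $e_1$-component should give a pure monomial constraint that forces one off-diagonal entry to vanish.

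With that entry eliminated, the $(e_1,e_1)$ and $(e_2,e_1)$ relations should collapse to two decoupled quadratic equations of the form $r(r+1)=0$ on the diagonal entries, together with a residual linear equation mixing the surviving off-diagonal parameter with the diagonal values. A short case analysis on the discrete choices for the diagonal entries, combined with the linear residual, should leave exactly the two parametric families $P_1$ and $P_2$ of the statement. The main obstacle will be the careful bookkeeping of the nonlinear cross-terms in the $(e_1,e_1)$ identity and making sure that each subcase is resolved consistently, so that no spurious branch is retained and no genuine family is lost. A concluding direct substitution of $P_1$ and $P_2$ into~\eqref{eq4} with $\lambda=1$ on every basis pair then confirms that both are genuine weight-$1$ Rota--Baxter operators, completing the classification.
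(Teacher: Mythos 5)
Your setup is the right one --- parametrize $P$ by the matrix $(r_{ij})$, expand the weight-$1$ identity on all four ordered basis pairs using the multiplication table of $A_1$ from Theorem~1.1 ($e_1\cdot e_1=e_1+e_2$, $e_2\cdot e_1=e_2$, all other products zero), and solve the resulting polynomial system. The problem is that the outcome you predict for this computation is not what the computation actually gives, so your concluding step (``direct substitution of $P_1$ and $P_2$ \dots confirms that both are genuine weight-$1$ Rota--Baxter operators'') would fail. Concretely, take $P_1$ with $r_{12}\neq 0$, i.e.\ $P(e_1)=r_{12}e_2$, $P(e_2)=0$, and test the pair $(e_1,e_1)$: the left side is $P(e_1)\cdot P(e_1)=r_{12}^2\,(e_2\cdot e_2)=0$, while the right side is
\[
P\bigl(P(e_1)\cdot e_1+e_1\cdot P(e_1)+e_1\cdot e_1\bigr)=P\bigl(r_{12}e_2+0+e_1+e_2\bigr)=P(e_1)+(r_{12}+1)P(e_2)=r_{12}e_2\neq 0 .
\]
Carrying your system to the end: the pair $(e_1,e_2)$ forces $r_{21}^2=0$; with $r_{21}=0$ the $e_1$-component of $(e_1,e_1)$ gives $r_{11}(r_{11}+1)=0$, the pair $(e_2,e_1)$ gives $r_{22}(r_{22}+1)=0$, and the $e_2$-component of $(e_1,e_1)$ gives $r_{11}^2-r_{11}r_{12}-r_{12}-(2r_{11}+r_{12}+1)r_{22}=0$. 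The case analysis you outline then leaves only $P=0$ and $P=-\id$, not the two families in the statement: the family $P_1$ with $r_{12}\neq0$ is spurious, and the genuine solution $-\id$ (which is a weight-$1$ Rota--Baxter operator on any algebra) is absent from the list.

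The source of the mismatch is that the paper's own proof does not use the algebra $A_1$ of Theorem~1.1 at all: it declares ``$e_2\cdot e_1=e_1$ and all other products zero,'' which is the table of $A_4$, and even on that table it drops the $a_{22}$ term from the sum $P(e_2)\cdot e_1+e_2\cdot P(e_1)+e_2\cdot e_1=(a_{11}+a_{22}+1)e_1$ and ignores the $(e_1,e_1)$ constraint $a_{12}^2=0$. So your plan, executed honestly on the stated $A_1$, refutes the theorem rather than proving it; to salvage anything you would have to either reclassify the operators (obtaining $\{0,-\id\}$ for $A_1$ as defined) or identify which algebra the authors actually intended.
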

\begin{proof}
Let $A_1$ have basis $\{e_1, e_2\}$ with $e_2 \cdot e_1 = e_1$ and all other products zero. Suppose
\[
P(e_1) = a_{11} e_1 + a_{12} e_2, \quad P(e_2) = a_{21} e_1 + a_{22} e_2.
\]
The Rota-Baxter condition of weight $1$ is
\[
P(x) \cdot P(y) = P\left(P(x) \cdot y + x \cdot P(y) + x \cdot y\right).
\]
Since most products are zero, only $e_2 \cdot e_1 = e_1$ matters. Applying the condition on $(e_2, e_1)$ gives
\[
a_{22}a_{11} e_1 = (a_{11} + 1)(a_{11} e_1 + a_{12} e_2).
\]
Comparing coefficients, we get $a_{11} = 0$ and $a_{22} = 0$. Similarly, applying the condition on other pairs forces $a_{21} = 0$. Thus,
\[
P = \left(\begin{array}{cc}
0 & r_{12} \\
0 & 0
\end{array}\right) \quad \text{or} \quad \left(\begin{array}{cc}
0 & 0 \\
0 & 0
\end{array}\right),
\]
with $r_{12} \in \mathbb{C}$ arbitrary.
This completes the proof.
\end{proof}

We present the list of Rota-Baxter operators of weights 0 and 1 on the on the $(A_i)_{2,\ldots,8}$ pre-Lie algebras.
\begin{center}
\begin{tabular}{|c|c|c|}
\hline
\textbf{Algebra} & \textbf{Rota-Baxter Operators of Weight $0$} & \textbf{Restrictions} \\
\hline

$A_2$ &
$P=\left(\begin{array}{cc}
 0 & 0 \\
r_{21} &0
\end{array}\right)$ &\\
\hline
$A_3$ &
$P_1 = \left(\begin{array}{cc}
0 & 0 \\
r_{21} &r_{22}
\end{array}\right),
\quad
P_2 = \left(\begin{array}{cc}
r_{11} & 0 \\
r_{21} &\frac{1}{2}r_{11}
\end{array}\right)$ &\\
\hline
$A_4$ &
$P_1 = \left(\begin{array}{cc}
0&r_{12} \\
0 &0
\end{array}\right),
\quad
P_2 = \left(\begin{array}{cc}
0&0 \\
0 &r_{22}
\end{array}\right)$ & \\
\hline
$A^{\alpha}_5$ &
$P_1 = \left(\begin{array}{cc}
0&0 \\
r_{21} &0
\end{array}\right),
\quad
P_2 = \left(\begin{array}{cc}
0&0 \\
0&0
\end{array}\right)$ &\\
\hline
$A^{\alpha}_6$ &
$P = \left(\begin{array}{cc}
0&0 \\
r_{21} &0

\end{array}\right)$ &\\
\hline
$A_7$ &
$P=\left(\begin{array}{cc}
 0 & 0 \\
0 & 0
\end{array}\right)$ & \\
\hline
$A_8$ &
$P = \left(\begin{array}{cc}
0 & 0 \\
0 & 0
\end{array}\right)$ & \\
\hline
\end{tabular}
\end{center}

\begin{center}
\begin{tabular}{|c|c|c|}
\hline
\textbf{Algebra} & \textbf{Rota-Baxter Operators of Weight $1$} & \textbf{Restrictions} \\
\hline

$A_2$ &
$P_1=\left(\begin{array}{cc}
 0 & r_{12} \\
0 & 0
\end{array}\right) ,
\quad
P_2 = \left(\begin{array}{cc}
0 & 0 \\
0 & 0
\end{array}\right)$ &  \\
\hline
$A_3$ &
$P_1 = \left(\begin{array}{cc}
r_{11} & r_{12} \\
0 & \frac{1}{2} r_{11}
\end{array}\right),
\quad
P_2 = \left(\begin{array}{cc}
0 & r_{12} \\
0 & r_{22}
\end{array}\right),$
&
\\
&
$P_3 = \left(\begin{array}{cc}
0 & -r_{22} \\
0 & r_{22}
\end{array}\right)$ &\\
\hline
$A_4$ &
$P_1 = \left(\begin{array}{cc}
0 & 0 \\
0 & r_{22}
\end{array}\right),
\quad
P_2 = \left(\begin{array}{cc}
0 & 0 \\
r_{21} & 0
\end{array}\right)$
&
\\
&
$
P_3 = \left(\begin{array}{cc}
0 & 0 \\
0 & 0
\end{array}\right)$ & \\
\hline
$A^{\alpha}_5$ &
$P_1 = \left(\begin{array}{cc}
0 & - r_{22} \\
0 & r_{22}
\end{array}\right),
\quad
P_2 = \left(\begin{array}{cc}
0 &  r_{21} \\
0 &  r_{22}
\end{array}\right)$
&
\\
&
$
P_3 = \left(\begin{array}{cc}
0 &  r_{12} \\
0 & 0
\end{array}\right),
\quad
P_4 = \left(\begin{array}{cc}
0 & 0 \\
0 & 0
\end{array}\right)$
&
\\
&
$
P_5 = \left(\begin{array}{cc}
 -r_{21} &-r_{21} \\
 r_{21} &  r_{21}
\end{array}\right),
\quad
P_6 = \left(\begin{array}{cc}
r_{11} &  r_{12} \\
-\frac{ r^2_{11}}{ r_{12}} & - r_{11}
\end{array}\right)$
&
\\
&
$
P_7 = \left(\begin{array}{cc}
0 &  0 \\
r_{21} & 0
\end{array}\right)$ &\\
\hline
$A^{\alpha}_6$ &
$P_1 = \left(\begin{array}{cc}
0 &  r_{12} \\
0 & 0
\end{array}\right),
\quad
P_2 = \left(\begin{array}{cc}
0 &  0 \\
0 &  0
\end{array}\right)$
&
\\
&
$
P_3 = \left(\begin{array}{cc}
-r_{22} &  r_{12} \\
-\frac{r^2_{22}}{r_{12}} & r_{22}
\end{array}\right)$,

$P_4 = \left(\begin{array}{cc}
-r_{22} & -r_{22} \\
r_{22} & r_{22}
\end{array}\right)$
&
\\
&
$
P_5 = \left(\begin{array}{cc}
 r_{21} &0 \\
 r_{21} &0
\end{array}\right)$ &\\
\hline
$A_7$ &
$P_1=\left(\begin{array}{cc}
 0 & 0 \\
0 & 0
\end{array}\right) ,
\quad
P_2 = \left(\begin{array}{cc}
r_{12} & r_{12} \\
r_{12} & r_{12}
\end{array}\right)$ & \\
\hline
$A_8$ &
$P = \left(\begin{array}{cc}
0 & 0 \\
0 & 0
\end{array}\right)$ & \\
\hline
\end{tabular}
\end{center}

\subsection{ Reynolds operator}
\begin{thm}
All Reynolds operators on the $2$-dimensional pre-Lie algebra $A_1$ are listed below:\\
$P_1 =
\left(
\begin{array}{cc}
0 &0 \\
R_{21} & 0
\end{array}
\right)$
\end{thm}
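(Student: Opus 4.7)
The plan is to write the general operator as $P(e_1)=r_{11}e_1+r_{12}e_2$, $P(e_2)=r_{21}e_1+r_{22}e_2$ and impose the Reynolds identity \eqref{eq6} on each of the four basis pairs $(e_i,e_j)$. Since $A_1$ has only the nonzero products $e_1\cdot e_1=e_1+e_2$ and $e_2\cdot e_1=e_2$, many terms collapse; the pair $(e_2,e_2)$ yields the trivial identity $0=0$, so the genuine content sits in the three pairs $(e_1,e_1)$, $(e_1,e_2)$, $(e_2,e_1)$.

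The decisive pair is $(e_1,e_1)$. Expanding $P(e_1)\cdot P(e_1)$ bilinearly gives a vector whose coefficients on $\{e_1,e_2\}$ are quadratic in the $r_{ij}$, while applying $P$ to $e_1\cdot P(e_1)+P(e_1)\cdot e_1 - P(e_1)\cdot P(e_1)$ produces coefficients that are cubic in the $r_{ij}$. Matching like terms yields two polynomial relations from which I expect to deduce first $r_{11}=0$ and then $r_{12}=0$. Substituting these back into the constraints coming from $(e_1,e_2)$ and $(e_2,e_1)$ should pin down $r_{22}=0$ while leaving $r_{21}$ unconstrained. A final verification step confirms that $P(e_1)=R_{21}e_2$, $P(e_2)=0$ satisfies \eqref{eq6} on every basis pair, as both sides vanish thanks to $e_2\cdot e_2=0$ and $P(e_2)=0$.

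The main obstacle is the cubic nonlinearity in the $r_{ij}$ introduced by the term $-P(x)\cdot P(y)$ sitting inside the argument of $P$ on the right-hand side of the Reynolds identity; careless elimination can create spurious branches or lose genuine ones. To handle this cleanly I would proceed hierarchically: extract $r_{11}=0$ first from the highest-degree relation produced by $(e_1,e_1)$, propagate this through the remaining equations, and split into explicit subcases (for instance according to whether a pivot coefficient such as $r_{12}$ vanishes) to ensure no solution is either missed or introduced. Once every branch is tracked, all of them collapse onto the single one-parameter family claimed in the statement.
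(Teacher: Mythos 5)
Your plan (write $P$ with four unknown entries, impose the Reynolds identity on all four basis pairs, and solve the resulting polynomial system) is the right way to prove an ``all Reynolds operators are\dots'' statement, and it is already more than the paper does: the paper's proof only \emph{verifies} that the displayed matrix satisfies the identity and never addresses completeness. However, your proposal has two concrete problems. First, the pair $(e_2,e_2)$ does \emph{not} reduce to $0=0$: writing $P(e_2)=c\,e_1+d\,e_2$, one gets $P(e_2)\cdot P(e_2)=c^2e_1+(c^2+cd)e_2$, which is nonzero whenever $c\neq 0$, and the resulting equation $c^2(a+c+d)=0$ (with $P(e_1)=a\,e_1+b\,e_2$) is one of the constraints you need; discarding this pair risks losing equations. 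Second, and decisively, the elimination you describe cannot end where you say it does, because the stated theorem is false as an ``all'' statement: the identity map is a Reynolds operator on \emph{any} algebra (both sides of \eqref{eq6} reduce to $x\cdot y$), and more generally the one-parameter family $P(e_1)=e_1+\beta e_2$, $P(e_2)=e_2$ satisfies \eqref{eq6} on $A_1$ for every $\beta$ --- one checks directly that for $(e_1,e_1)$ both sides equal $e_1+(1+\beta)e_2$, for $(e_2,e_1)$ both sides equal $e_2$, and the remaining pairs give $0=0$. Concretely, the $(e_1,e_1)$ equation in the branch $c=0$ reads $a^2(1-a)=0$, so it yields $a\in\{0,1\}$ rather than forcing $a=0$ as you assert; the $a=1$ branch survives all the other equations.

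So the gap is not merely that the computations are sketched rather than performed: the asserted outcome of those computations (``first $r_{11}=0$, then $r_{12}=0$, then $r_{22}=0$'') is incorrect, and an honest execution of your own plan would produce at least one additional family and thereby refute the theorem rather than prove it. There is also an indexing inconsistency you should resolve before computing: you set up $P(e_1)=r_{11}e_1+r_{12}e_2$ but conclude with ``$r_{12}=0$ and $r_{21}$ free,'' whereas your final verification uses $P(e_1)=R_{21}e_2$, $P(e_2)=0$; under your own naming the surviving parameter would sit in $P(e_2)$, and the operator $P(e_1)=0$, $P(e_2)=te_1$ fails \eqref{eq6} on $(e_2,e_2)$ unless $t=0$. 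Only the reading $P(e_1)=R_{21}e_2$, $P(e_2)=0$ actually satisfies the identity, so the convention must be fixed consistently before the system is written down.
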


\begin{proof}

\begin{proof}
We compute
\[
P(e_1) \cdot P(e_1) = (R_{21}e_2) \cdot (R_{21}e_2) =R_{21}^2 (e_2 \cdot e_2).
\]

From the pre-Lie algebra product, we have:
\[
e_2 \cdot e_2 = 0 \quad \implies \quad P(e_1) \cdot P(e_1) = 0.
\]

Next, we compute:
\[
P(P(e_1) \cdot e_1 + e_1 \cdot P(e_1)-P(e_1)\cdot P(e_1)) = P(R_{21}e_2 \cdot e_1 + e_1 \cdot R_{21}e_2-R_{21}e_2\cdot R_{21}e_2).
\]
Expanding further:
\[
P(R_{21}(e_2 \cdot e_1) + R_{21}(e_1 \cdot e_2)-R_{21}P(e_2\cdot e_2)).
\]

From the pre-Lie algebra product, we know:
\[
e_2 \cdot e_1 = e_2, \quad e_1 \cdot e_2 = 0,\quad e_2 \cdot e_2 = 0  \quad \text{and} \quad P(e_2) = 0.
\]
Therefore, it follows that:
\[
P(x) \cdot P(y) = P(x\cdot(y) + P(x)\cdot y-P(x)\cdot P(y)) \quad \text{for all } x, y \in \{e_1, e_2\}.
\]
\end{proof}

The Reynolds operators are explicitly determined as follows:
\[
P_1 =
\begin{pmatrix}
0 &0 \\
R_{21} & 0
\end{pmatrix}.
\]
If \( R_{21} = 0 \), the operator becomes degenerate (a zero matrix), which is not useful or meaningful in this context.

Thus, the restriction \( R_{21} \neq 0 \) ensures that \( P_1 \) is a non-trivial operator and preserves the necessary algebraic structure.
This completes the proof.
\end{proof}

The Reynolds operators on the algebras $(A_i)_{2,\ldots,8}$ are listed below.
\begin{center}
\begin{tabular}{|c|c|c|}
\hline
\textbf{Algebra} & \textbf{Reynolds Operators} & \textbf{Restrictions} \\
\hline

$A_2$ &
$P_1 =
\left(
\begin{array}{cc}
0 &0\\
0 & 0
\end{array}
\right)$
&\\
\hline
$A_3$ &
$P_1 =
\left(
\begin{array}{cc}
0 & 0 \\
R_{21} & R_{22}
\end{array}
\right)$ & $R_{21} \neq 0, R_{22} \neq 0$ \\
\hline

$A_4$ &
$P_1 =
\left(
\begin{array}{cc}
0 & 0 \\
0 & R_{22}
\end{array}
\right), \quad
P_2 =
\left(
\begin{array}{cc}
0 & -R_{22} \\
0 & R_{22}
\end{array}
\right)$ & $R_{22} \neq 0$ \\
\hline

$A_5^{\alpha}$ &
$P_1 = \left(\begin{array}{cc}
0 & 0 \\
R_{21} & 0
\end{array}\right)$ & $R_{21} \neq 0$ \\
\hline
$A_6^{\alpha}$ &
$P_1 = \left(\begin{array}{cc}
0 & 0 \\
0 & 0
\end{array}\right)$ &\\
\hline

$A_7$ &
$P_1 =
\left(
\begin{array}{cc}
0&0 \\
0&0
\end{array}
\right)$
&\\
\hline
$A_8$ &
$P_1 =
\left(
\begin{array}{cc}
0 & 0\\
0 &0
\end{array}
\right)$
&\\

\hline
\end{tabular}
\end{center}

\subsection{ Nijenhuis Operator}

\begin{thm}
There is one type of Nijenhuis operators for the $2$-dimensional pre-Lie
algebra $A_1$ , which is as follows:\\
$P=\begin{array}{cc}
\left(
\begin{array}{cc}
0&0 \\
N_{21}  &0
\end{array}
\right)
\end{array}$
\end{thm}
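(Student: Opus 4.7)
The plan is to parametrise $P$ by a generic $2\times 2$ matrix with entries $r_{ij}$, so that $P(e_1)=r_{11}e_1+r_{21}e_2$ and $P(e_2)=r_{12}e_1+r_{22}e_2$ in accord with the convention already used in the Rota--Baxter proofs for $A_1$, and to impose the Nijenhuis identity
\[
P(x)\cdot P(y) \;=\; P\bigl(P(x)\cdot y + x\cdot P(y) - P(x\cdot y)\bigr)
\]
on all four ordered pairs $(x,y)\in\{e_1,e_2\}^2$. Recall that the multiplication of $A_1$ is given by $e_1\cdot e_1=e_1+e_2$ and $e_2\cdot e_1=e_2$, while $e_1\cdot e_2=e_2\cdot e_2=0$, so only two of the four basis products are nonzero.

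First I would expand both sides on each basis pair. The left-hand sides $P(e_i)\cdot P(e_j)$ are quadratic polynomials in the $r_{ij}$, while the right-hand sides involve the composite $P\circ P$ contribution coming from the distinguishing term $-P(x\cdot y)$ of the Nijenhuis axiom: this inner $P$ vanishes on the pairs where $x\cdot y=0$, but it contributes $-P(e_1)-P(e_2)$ on $(e_1,e_1)$ and $-P(e_2)$ on $(e_2,e_1)$. Equating the $e_1$- and $e_2$-components separately on each pair then produces a polynomial system in the four unknowns.

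I expect the system to unwind in the following order: the pair $(e_1,e_2)$ yields a relation of the form $r_{12}^{2}=0$, immediately giving $r_{12}=0$; with this substitution, the pair $(e_1,e_1)$ reduces to $(r_{11}-r_{22})^{2}=0$, and the pair $(e_2,e_1)$ then eliminates the remaining diagonal freedom, forcing $r_{11}=r_{22}=0$, so that the only free entry is $r_{21}$, which we rename $N_{21}$. A direct substitution of the resulting $P=\left(\begin{smallmatrix}0 & 0 \\ N_{21} & 0\end{smallmatrix}\right)$ back into the axiom verifies sufficiency: on the only pair where something nontrivial could occur, namely $(e_1,e_1)$, the left-hand side is $N_{21}^{2}(e_2\cdot e_2)=0$, and the right-hand side reduces to $P\bigl(N_{21}e_2+0-P(e_1+e_2)\bigr)=P(N_{21}e_2-N_{21}e_2)=0$; the remaining pairs are even simpler because every instance involves $P(e_2)=0$ or $e_2\cdot e_2=0$.

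The main obstacle I anticipate is the bookkeeping for the $(e_1,e_1)$ equation: the composite term $P\bigl(P(e_1)+P(e_2)\bigr)$ is quadratic in all four entries $r_{ij}$, and its careful expansion into the $e_1$- and $e_2$-components must be balanced against the genuine quadratic left-hand side without sign errors coming from the subtraction $-P(x\cdot y)$. Organising the comparison monomial by monomial, in parallel with the weight-zero Rota--Baxter calculation for $A_1$ already recorded in the paper, is the cleanest way to close out the polynomial system and arrive at the stated classification.
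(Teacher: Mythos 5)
There is a genuine gap, and it occurs exactly at the step you flag as closing the argument. After you obtain $r_{12}=0$ from the pair $(e_1,e_2)$, the pair $(e_2,e_1)$ does \emph{not} eliminate the diagonal. Writing $P(e_1)=r_{11}e_1+r_{21}e_2$ and $P(e_2)=r_{22}e_2$, one has
\[
P(e_2)\cdot P(e_1)=r_{22}r_{11}\,e_2,\qquad
P(e_2)\cdot e_1+e_2\cdot P(e_1)-P(e_2\cdot e_1)=r_{22}e_2+r_{11}e_2-r_{22}e_2=r_{11}e_2,
\]
so the right-hand side of the Nijenhuis identity is $P(r_{11}e_2)=r_{11}r_{22}e_2$, and the equation on $(e_2,e_1)$ is identically satisfied. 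The only surviving constraints are $r_{12}=0$ and, from $(e_1,e_1)$, $(r_{11}-r_{22})^2=0$; the pair $(e_2,e_2)$ is vacuous. Hence the full solution set is
\[
P=\begin{pmatrix} a & 0\\ c & a\end{pmatrix},\qquad a,c\in\mathbb{C},
\]
which strictly contains the claimed one-parameter family (the case $a=0$). In particular $P=\mathrm{id}$ is a Nijenhuis operator on $A_1$ (as it is on any algebra, since $P(x)\cdot P(y)=x\cdot y=P(x\cdot y+x\cdot y-x\cdot y)$) and is not of the form $\left(\begin{smallmatrix}0&0\\ N_{21}&0\end{smallmatrix}\right)$. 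So the elimination you propose cannot succeed: the theorem's uniqueness claim is false as stated, and any derivation forcing $r_{11}=r_{22}=0$ must contain an algebraic error.

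For what it is worth, your overall strategy --- parametrising $P$ generically and solving the polynomial system on all four basis pairs --- is the right one and is more than the paper itself does: the paper's proof only verifies that the displayed family satisfies the identity and never addresses exhaustiveness. Your sufficiency check for $\left(\begin{smallmatrix}0&0\\ N_{21}&0\end{smallmatrix}\right)$ is correct and matches the paper's computation. The fix is to report the correct, larger answer $aI+cE_{21}$ rather than to try to recover the stated one.
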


\begin{proof}
Let \( P \) be the Nijenhuis operator defined as
\[
P = \begin{pmatrix}
0&0 \\
N_{21}  &0
\end{pmatrix}.
\]
We verify that \( P(x) \cdot P(y) = P(P(x) \cdot y + x \cdot P(y) - P(x \cdot y)) \) holds for all \( x, y \in \{e_1, e_2\} \).

First, compute \( P(e_1) \cdot P(e_1) \):
\[
P(e_1) \cdot P(e_1)=N_{21}e_2\cdot N_{21}e_2=N_{21}^2(e_2\cdot e_2).
\]
From the given product in the pre-Lie algebra, \( e_2\cdot e_2 = 0 \) and . Hence,
\[
P(e_1) \cdot P(e_1)=0.
\]

Next, compute \( P(P(e_1) \cdot e_1 + e_1 \cdot P(e_1)-P(e_1)\cdot P(e_1))\):
\[
 P(P(e_1) \cdot e_1 + e_1 \cdot P(e_1)-P(e_1)\cdot P(e_1))=P(N_{21}(e_2\cdot e_1)+N_{21}(e_1\cdot e_2)-P(e_1)-P(e_2))
\]
From the pre-Lie algebra product, we know:
\[
e_2 \cdot e_1 = e_2, \quad e_1 \cdot e_2 = 0, \quad P(e_2) =0.
\]
Thus:
\( P(P(e_1) \cdot e_1 + e_1 \cdot P(e_1)-P(e_1)\cdot P(e_1))\)=0
Therefore, it follows that:
\[
P(x) \cdot P(y) = P(P(x) \cdot y + x \cdot P(y) - P(x \cdot y)) \quad \forall x, y \in \{e_1, e_2\}.
\]
\text{If } $N_{21} = N_{11} = 0$, \text{ then } $P_1 = P_2 = 0$, \text{ so to get nonzero Nijenhuis operators, we require } $N_{21} \neq 0 \text{ or } N_{11} \neq 0$.
This completes the proof.
\end{proof}

The Nijenhuis operators on the algebras $(A_i)_{2,\ldots,8}$ are listed below.

\begin{center}
\begin{tabular}{|c|c|c|}
\hline
\textbf{Algebra} & \textbf{Nijenhuis Operators} & \textbf{Restrictions} \\
\hline

$A_2$ &
$P=\begin{array}{cc}
\left(
\begin{array}{cc}
0&0 \\
N_{21}  &0
\end{array}
\right)
\end{array}$ & $N_{21} \neq 0$ \\
\hline
$A_3$  &
$P=\begin{array}{cc}
\left(
\begin{array}{cc}
\frac{1}{2}N_{22}&0 \\
N_{21} & N_{22}
\end{array}
\right)
\end{array}$ & $N_{21} \neq 0 \; or \; N_{22} \neq 0$ \\
\hline

$A_4$ &
$P_1=\left(
\begin{array}{cc}
N_{11} & 0 \\
0 & 0
\end{array}
\right),
\quad
P_2=\left(
\begin{array}{cc}
N_{11} & 0 \\
N_{11} & 0
\end{array}
\right),$
&
\\
&
$P_3=\left(
\begin{array}{cc}
0 & N_{12} \\
N_{11} & 0
\end{array}
\right),
\quad
P_4=\left(
\begin{array}{cc}
0 & 0 \\
0 & N_{22}
\end{array}
\right)
$
&
$N_{11} \; \text{or} \; N_{12} \; \text{or} \; N_{22} \neq 0$
\\
\hline
$A_5^{\alpha}$ &
$P_1=\begin{array}{cc}
\left(
\begin{array}{cc}
0&0 \\
0&N_{22}
\end{array}
\right)
\end{array},
\,P_2=\begin{array}{cc}
\left(
\begin{array}{cc}
0&0 \\
N_{21}&0
\end{array}
\right)
\end{array}$ & $N_{21} \neq 0 \; or \; N_{22} \neq 0$ \\
\hline
$A_6^{\alpha}$  &
$P_1=\begin{array}{cc}
\left(
\begin{array}{cc}
0&0 \\
0&N_{22}
\end{array}
\right)
\end{array},
\,P_2=\begin{array}{cc}
\left(
\begin{array}{cc}
0&0 \\
N_{21}&0
\end{array}
\right)
\end{array}$& $N_{21} \neq 0 \; or \; N_{22} \neq 0$ \\
\hline
$A_7$  &
$P_1=\begin{array}{cc}
\left(
\begin{array}{cc}
-N_{12}&N_{12}\\
-N_{12}& N_{12}
\end{array}
\right)
\end{array}$ & $N_{12} \neq 0 $ \\
\hline
$A_8$  &
$P_1=\begin{array}{cc}
\left(
\begin{array}{cc}
N_{11}&0\\
-2N_{11} &0
\end{array}
\right)
\end{array}$& $N_{11} \neq 0$ \\
\hline
\end{tabular}
\end{center}
\subsection{ Averaging Operator}

\begin{thm}\label{thm1}
All averaging operators on the $2$-dimensional pre-Lie algebra
$A_1$ are listed below:
$P_1=\begin{array}{cc}
\left(
\begin{array}{cc}
\vartheta_{22}&0 \\
0 &\vartheta_{22}
\end{array}
\right),
\quad
P_2=\left(
\begin{array}{cc}
0&0 \\
\vartheta_{21}&0
\end{array}
\right)
\end{array}$
\end{thm}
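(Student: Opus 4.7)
The plan is to parametrise $P$ by its four scalar entries $\vartheta_{11},\vartheta_{12},\vartheta_{21},\vartheta_{22}$ in the basis $\{e_1,e_2\}$ with the same column convention used in the previous proofs, so that $P(e_1)=\vartheta_{11}e_1+\vartheta_{21}e_2$ and $P(e_2)=\vartheta_{12}e_1+\vartheta_{22}e_2$, and then to impose the averaging identity
\[
P(x)\cdot P(y)=P\bigl(x\cdot P(y)\bigr)=P\bigl(P(x)\cdot y\bigr)
\]
on each of the four ordered pairs $(x,y)\in\{e_1,e_2\}\times\{e_1,e_2\}$, using the $A_1$-products $e_1\cdot e_1=e_1+e_2$, $e_2\cdot e_1=e_2$, $e_1\cdot e_2=0$, $e_2\cdot e_2=0$. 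Each pair contributes at most four scalar equations, and the goal is to describe the complete solution set of the resulting polynomial system in the four unknowns.

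The cheapest opening move is to exploit the pair $(e_1,e_2)$. Since $P(e_1)\cdot e_2=0$, the averaging identity forces $P(e_1)\cdot P(e_2)=0$; expanding $P\bigl(e_1\cdot P(e_2)\bigr)$ and comparing its $e_1$-coefficient with the previous vanishing yields $\vartheta_{12}^{2}=0$, so $\vartheta_{12}=0$ immediately. With $\vartheta_{12}=0$ the pair $(e_2,e_2)$ becomes tautological, because every product that then appears on either side involves $e_2\cdot e_2$ or $e_1\cdot e_2$ and hence vanishes.

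The remaining content lives in the pairs $(e_1,e_1)$ and $(e_2,e_1)$. Under $\vartheta_{12}=0$ the nontrivial relations reduce to
\[
\vartheta_{11}(\vartheta_{11}-\vartheta_{22})=0,\qquad \vartheta_{22}(\vartheta_{22}-\vartheta_{11})=0,\qquad \vartheta_{11}^{2}=(\vartheta_{11}+\vartheta_{21})\vartheta_{22}.
\]
The first two symmetric quadratic relations force either $\vartheta_{11}=\vartheta_{22}$ or $\vartheta_{11}=\vartheta_{22}=0$, so a short case split now finishes the argument.

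If $\vartheta_{11}=\vartheta_{22}\neq 0$, the third relation simplifies to $\vartheta_{11}\vartheta_{21}=0$, hence $\vartheta_{21}=0$ and $P$ is the scalar operator $\vartheta_{22}I$, which is the family $P_1$. If instead $\vartheta_{11}=\vartheta_{22}=0$, every remaining constraint is automatically satisfied and $\vartheta_{21}$ remains free, producing the family $P_2$. I do not expect any conceptual difficulty; the main obstacle is organisational, namely keeping the eight separate scalar equations (two coefficients times four pairs, modulo trivialities) in order, and verifying at the end that the two families are genuinely distinct, since they intersect only at the zero operator.
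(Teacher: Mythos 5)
Your derivation is correct, and it is in fact more complete than the argument the paper gives. The paper's proof only \emph{verifies} that the scalar operator $P_1=\vartheta_{22}\,\mathrm{id}$ satisfies the averaging identity on the single pair $(e_1,e_1)$ and then asserts the conclusion; it never checks the family $P_2$, and more importantly it never addresses exhaustiveness, which is the actual content of a theorem claiming to list \emph{all} averaging operators. You, by contrast, set up the full polynomial system in $\vartheta_{11},\vartheta_{12},\vartheta_{21},\vartheta_{22}$ and solve it. Your individual steps check out: the pair $(e_1,e_2)$ gives $P(e_1)\cdot P(e_2)=P(P(e_1)\cdot e_2)=0$ while the $e_1$-coefficient of $P(e_1\cdot P(e_2))$ is $\vartheta_{12}(\vartheta_{11}+\vartheta_{12})$ against $\vartheta_{11}\vartheta_{12}$ from the product side, forcing $\vartheta_{12}^2=0$; the three residual relations $\vartheta_{11}(\vartheta_{11}-\vartheta_{22})=0$, $\vartheta_{22}(\vartheta_{22}-\vartheta_{11})=0$, $\vartheta_{11}^2=(\vartheta_{11}+\vartheta_{21})\vartheta_{22}$ are exactly what the pairs $(e_1,e_1)$ and $(e_2,e_1)$ produce, and the case split correctly yields $\vartheta_{22}\,\mathrm{id}$ and the strictly lower-triangular family. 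One point worth making explicit: you adopt the column convention $P(e_1)=\vartheta_{11}e_1+\vartheta_{21}e_2$, which contradicts the row convention displayed at the end of the paper's introduction but agrees with what the paper actually computes in its proofs (e.g.\ $P(e_1)=r_{21}e_2$ for the weight-$0$ Rota--Baxter operator on $A_1$); under the displayed row convention $P_2$ would fail the identity on $(e_2,e_2)$, so flagging the convention is not pedantry here. Your closing remark that the two families meet only in the zero operator is also accurate, and your derivation incidentally shows that the paper's implicit nondegeneracy restrictions $\vartheta_{22}\neq 0$, $\vartheta_{21}\neq 0$ are cosmetic rather than forced by the identity.
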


\begin{proof}
We compute
\[
P(e_1) \cdot P(e_1) = (\vartheta_{22}e_1) \cdot (\vartheta_{22}e_1) = \vartheta_{22}^2(e_1\cdot e_1).
\]
From the pre-Lie algebra product, we know:
\[
e_1\cdot e_1 =e_1+e_2 \implies P(e_1) \cdot P(e_1) =\vartheta_{22}^2(e_1+e_2).
\]

Next, we compute:
\[
P(e_1\cdot P(e_1)=P(e_1\cdot \vartheta_{22}e_1)=\vartheta_{22}P(e_1\cdot e_1).
\]
From the pre-Lie algebra product, we know:
\[
e_1\cdot e_1 =e_1+e_2.
\]
Thus:
\[
P(e_1\cdot P(e_1))=\vartheta_{22}^2(e_1+e_2)=P( P(e_1)\cdot e_1).
\]

Therefore, it follows that:
\[
P(x) \cdot P(y) =P(x\cdot P(y))= P(P(x) \cdot y)) \quad \text{for all } x, y \in \{e_1, e_2\}.
\]
If $\vartheta_{22} = 0$ or $\vartheta_{21} = 0$, then $P_1$ or $P_2$ becomes the zero map, making the averaging operator trivial; thus, $\vartheta_{22} \neq 0$ and $\vartheta_{21} \neq 0$.

This completes the proof.
\end{proof}

The averaging operators on the algebras $(A_i)_{2,\ldots,8}$ are listed below.
\begin{center}
\begin{tabular}{|c|c|c|}
\hline
\textbf{Algebra} & \textbf{Averaging Operators} & \textbf{Restrictions} \\
\hline

$A_2$ &
$P_1=\begin{array}{cc}
\left(
\begin{array}{cc}
\vartheta_{22}&0 \\
0 &\vartheta_{22}
\end{array}
\right),
\quad
P_2=\left(
\begin{array}{cc}
0&0 \\
\vartheta_{21} &0
\end{array}
\right)
\end{array}$& $\vartheta_{22} \neq 0, \vartheta_{21} \neq 0$ \\
\hline
$A_3$ &
$P_1=\begin{array}{cc}
\left(
\begin{array}{cc}
0\vartheta_{11}&0 \\
\vartheta_{21} &\vartheta_{11}
\end{array}
\right),
\quad
P_2=\left(
\begin{array}{cc}
0&0 \\
\vartheta_{21} &\vartheta_{22}
\end{array}
\right)
\end{array}$ & $\vartheta_{11} \neq 0, \vartheta_{21} \neq 0, \vartheta_{22} \neq 0$ \\
\hline
$A_4$ &
$P_1=\begin{array}{cc}
\left(
\begin{array}{cc}
\vartheta_{11} &0 \\
0 &\vartheta_{11}
\end{array}
\right),
\quad
P_2=\left(
\begin{array}{cc}
&\vartheta_{12} \\
&\vartheta_{22}
\end{array}
\right)
\end{array}$ & $\vartheta_{11} \neq 0, \vartheta_{12} \neq 0, \vartheta_{22} \neq 0$ \\
\hline
$A_5^{\alpha}$ &
$P_1 = \left(\begin{array}{cc}
\vartheta_{22}&  0 \\
0 & \vartheta_{22}
\end{array}\right),
\quad
P_2 = \left(\begin{array}{cc}
0 & 0\\
\vartheta_{21} &0
\end{array}\right),
\quad
P_3 = \left(\begin{array}{cc}
\vartheta_{11}& 0 \\
0 & 0
\end{array}\right)$
& $\vartheta_{11} \neq 0, \vartheta_{21} \neq 0, \vartheta_{22} \neq 0$ \\
\hline
$A_6^{\alpha}$ &
$P_1 = \left(\begin{array}{cc}
\vartheta_{22}&  0 \\
0 & \vartheta_{22}
\end{array}\right),
\quad
P_2 = \left(\begin{array}{cc}
0 & 0\\
\vartheta_{21} &0
\end{array}\right),
\quad
P_3 = \left(\begin{array}{cc}
\vartheta_{11}& 0 \\
0 & 0
\end{array}\right)$& $\vartheta_{11} \neq 0, \vartheta_{21} \neq 0, \vartheta_{22} \neq 0$ \\
\hline
$A_7$ &
$P_1=\begin{array}{cc}
\left(
\begin{array}{cc}
\vartheta_{11}&0 \\
0&\vartheta_{22}
\end{array}
\right),
\quad
P_2=\left(
\begin{array}{cc}
\vartheta_{11} &\vartheta_{12} \\
\vartheta_{11} & \vartheta_{22}
\end{array}
\right)
\end{array}$ & $\vartheta_{11} \neq 0, \vartheta_{22} \neq 0, \vartheta_{12} \neq 0$ \\

\hline
$A_8$ &
$P_1=\begin{array}{cc}
\left(
\begin{array}{cc}
\vartheta_{22}&0 \\
0&\vartheta_{22}
\end{array}
\right),
\quad
P_2=\left(
\begin{array}{cc}
0 &0 \\
2\vartheta_{22} &\vartheta_{22}
\end{array}
\right)
\end{array}$ & $ \vartheta_{22} \neq 0$ \\
\hline
\end{tabular}
\end{center}

\textbf{Data Availability:}
No data were used to support this study.

\textbf{Conflict of Interests:}
The authors declare that they have no conficts of interest.

\textbf{Acknowledgment:}
We thank the referee for the helpful comments and suggestions that contributed to improving this paper.

\end{document}